\def\titlename{\huge Amenability notions of hypergroups  and some applications to locally  compact groups}
\title{\titlename}
\def\authname{Mahmood Alaghmandan}
\author{{\normalsize\sc \authname}}
\definecolor{blue1}{RGB}{32,78,170}
\definecolor{blue2}{RGB}{93,92,160}
\definecolor{blue3}{RGB}{40,51,202}
\definecolor{blue4}{RGB}{0,0,0}
\definecolor{purple1}{RGB}{128,0,128}
\definecolor{El}{rgb}{.4,.9,1}
\normalfont\fontsize{12}{15}\bfseries}{\thesection}{1em}{}
\titleformat{\chapter}[display]
  {\normalfont\sffamily\huge\bfseries\color{blue4}}
  {\chaptertitlename\ \thechapter}{20pt}{\Huge}
\Large\color{blue4}}
\large\color{blue4}}
\newcommand{\ma}[1]{\textcolor{blue2}{\textsf{#1}}}
            \newcounter{pulse}[section]
\numberwithin{pulse}{section}
\numberwithin{equation}{section}
\newtheorem{theorem}[pulse]{\bf \textsf{Theorem}}
\newtheorem{proposition}[pulse]{\bf \textsf{Proposition}}
\newtheorem{lemma}[pulse]{\bf \textsf{Lemma}}
\newtheorem{prop}[pulse]{\bf \textsf{Proposition}}
\newtheorem{lem}[pulse]{\bf \textsf{Lemma}}
\newtheorem{cor}[pulse]{\bf \textsf{Corollary}}
\newtheorem{dummy-eg}[pulse]{\bf \textsf{Example}}
\newtheorem{dummy-rem}[pulse]{\bf \textsf{Remark}}
\newenvironment{eg}{\begin{dummy-eg}\upshape}{\end{dummy-eg}\ignorespacesafterend}
\newenvironment{rem}{\begin{dummy-rem}\upshape}{\end{dummy-rem}\ignorespacesafterend}
\newtheorem{dummy-def}[pulse]{\bf \textsf{Definition}}
\newenvironment{dfn}{\begin{dummy-def}\upshape}{\end{dummy-def}\ignorespacesafterend}
\newenvironment{proof}{\noindent{\it Proof.}\/}{\hfill $\Box$ \newline \ignorespacesafterend}
\newcommand{\supp}{\operatorname{supp}}
\newcommand{\conj}{\operatorname{Conj}}
\renewcommand{\ker}{\operatorname{Ker}}
\newcommand{\wH}{{\widehat{H}}}
\newcommand{\wG}{{\widehat{G}}}
\newcommand{\Zlg}{{Z\ell^1(G)}}
\newcommand{\Nat}{{\mathbb N}}
\newcommand{\norm}[1]{\Vert #1 \Vert}
\newcommand{\Ind}{{\bf I}}
\newcommand{\VN}{\operatorname{VN}}
\newcommand{\Irr}{\operatorname{Irr}}
\newcommand{\SU}{\operatorname{SU}}
\newcommand{\cM}{\mathcal{M}}
\newcommand{\ignore}[1]{{ }}
\begin{document}

\maketitle
 \begin{abstract}
 Different notions of amenability on hypergroups and their relations are studied. Developing Leptin's theorem for discrete hypergroups, we characterize the existence of a bounded approximate identity for hypergroup Fourier algebras. We study  the Leptin condition for discrete  hypergroups derived from the representation theory of some classes of compact groups. Studying amenability of the hypergroup algebras for  discrete commutative hypergroups, we obtain  some results on amenability properties of some central  Banach algebras on  compact and discrete groups.
\vskip1.0em
{\bf Keyword:} hypergroups;    Fourier algebra; amenability;  compact groups; finite conjugacy groups.
\vskip1.0em
{\bf AMS codes:} 43A62,  46H20.

 \end{abstract}
 

\vskip2.5em
 
 In this paper, we investigate different amenability notions  of hypergroups and their relations. In particular, we look closer at a generalization of  F\o lner type conditions over hypergroups. This generalization  lets us to investigate more structural properties of  some classes of hypergroups. In particular we study the existence  of  bounded approximate identities  for the Fourier algebra  of  regular Fourier hypergroups.    We prove a hypergroup analogue of  Leptin's theorem for discrete regular Fourier hypergroups that is, the existence of a bounded approximate identity for the Fourier algebra is equivalent to the existence of a square integrable Reiter's  net. The later condition is  denoted by $(P_2)$.
 
 We also study amenability of hypergroup algebras (as  Banach algebras)  for discrete commutative hypergroups satisfying $(P_2)$. We show that for this class of hypergroups, the hypergroup algebra cannot be  amenable  if the inverse of the  Haar measure vanishes at  infinity.   This result and the appearance of $(P_2)$ in hypergroup Leptin's theorem emphasize the importance of the  amenability notion  $(P_2)$.   At the end we apply these results to some  examples of hypergroup structures  which are admitted by two classes of  locally compact groups, namely compact and discrete groups.

This paper is organized as follows. In Section~\ref{s:hypergroups}, we  introduce the notation  and present some preliminaries  regarding hypergroups and their Fourier spaces.  
  In an attempt to answer a question about Segal algebras on compact groups, the author, in \cite{ma}, defined the Leptin condition for  hypergroups.   In Section~\ref{ss:Folner-condition-on-^G} we expand this study by introducing more hypergroup  F\o lner type conditions.
The question of approximate amenability of Segal algebras on compact groups highlights the importance of the Leptin condition for the class of discrete hypergroups defined on the dual space of compact groups, \cite{ma}. In Subsection~\ref{ss:Leptin-number}, we apply some studies on the fusion rule of compact  groups to consider the Leptin condition for the  hypergroup structures admitted by some classes of compact groups including $\operatorname{SU}(n)$.
In Section~\ref{ss:bai-of-A(H)-Leptin-condition},  we characterize the existence of a bounded approximate identity in the Fourier algebra of discrete regular Fourier  hypergroups using the amenability notion $(P_2)$.  This characterization is an analogue of the classical {Leptin's theorem}.
In Section~\ref{s:AM-L1(H)}, we study the amenability of  the hypergroup algebras for discrete commutative hypergroups satisfying condition $(P_2)$. We follow this in   Subsections~\ref{ss:AM-ZA(G)} and \ref{ss:amenability-of-zl1(G)} where we apply  this result to two classes of central algebras on discrete and compact groups.
\vskip1.0em

Some results of this paper are  based on the work from the author's Ph.D. thesis, \cite{ma-the}, under the supervision of Yemon Choi and Ebrahim Samei.  

\vskip1.0em

\begin{section}{Preliminaries and notation}\label{s:hypergroups}

For the definition and properties of hypergroups, we refer the reader mainly to \cite{bl}.   Let  $H$ be a   \ma{hypergroup} with the identity  element $e$.  The notation $A*B$ stands for  $\cup\{\supp(\delta_x*\delta_y):\; \text{for all }\;x\in A, y\in B\}$ for  subsets $A,B$ of   $H$. By abuse of notation, we use $x*A$ to denote $\{x\}*A$.
 Let $C_c(H)$ and $C_0(H)$ respectively denote the space  of all compactly supported and vanishing at infinity complex valued continuous functions on $H$.
 For each $f\in C_c(H)$ and $x,y\in H$,  define $L_xf(y):=\delta_{\tilde{x}}*\delta_y(f)$. A Borel measure $\lambda$ on $H$ is called  a (left)  \ma{Haar measure} if  $\lambda(L_xf)=\lambda(f)$ for all $f\in C_c(H)$ and $x\in H$. In this paper we assume that  hypergroup $H$ always  possesses a Haar measure $\lambda$. The \ma{modular function} $\Delta:H \rightarrow \Bbb{R}^+$ satisfies  the identity $\lambda *\delta_{\tilde{x}}=\Delta(x)\lambda$ for every $x\in H$. 
The Banach spaces of all $p$-absolutely $\lambda$-integrable functions on $H$ are denoted by $L^p(H)$ for $1\leq p <\infty$. In particular  $L^1(H)$ forms a Banach algebra equipped with convolution
\[
f*_\lambda g(x):=\int_{H} f(t) L_{{t}}g(x) dh(t)\ \ \text{($f,g \in L^1(H)$)}
\]  
which is called the \ma{hypergroup algebra} of $H$.

Let $C(H)$ denote the set of all continuous bounded functions on $H$.
If $H$ is a commutative hypergroup, the \ma{dual of $H$}, denoted by $\widehat{H}$, is defined to be the set
$\{\alpha\in C(H)\;|\;  \alpha(\delta_x*\delta_y)=\alpha(x)\alpha(y), \alpha(\tilde{x})=\overline{\alpha(x)} \}$ equipped with the topology of uniform convergence on compact subsets of $H$. 
Here we use  $\varpi$ to denote the  \ma{Plancherel measure} on $\widehat{H}$.

\vspace{1.0em}

For a compact hypergroup $H$, Vrem, \cite{vr}, defined the \ma{Fourier space} similar to the Fourier algebra of a compact group. Subsequently, Muruganandam, \cite{mu1}, defined the \ma{Fourier Stieltjes space} on an arbitrary (not necessary compact) hypergroup $H$ using irreducible representations of $H$  similar to the group case.  Subsequently, he defined 
the { Fourier space} of a hypergroup $H$, as a closed subspace of the Fourier Stieltjes space, generated by $\{\xi *_\lambda\tilde{\xi}:\; \xi \in L^2(H)\}$. Therefore, $A(H) \cap C_c(H)$ is dense in $A(H)$ and further,
$A(H) \subseteq C_0(H)$,  $\norm{\cdot}_\infty \leq \norm{\cdot}_{A(H)}$, and for every $u\in A(H)$, $L_x u$, $\check{u}$, and $\overline{u}$ belong to $A(H)$.
A hypergroup $H$ is called a \ma{regular Fourier hypergroup}, if the Banach space ${A}(H)$ equipped with the pointwise multiplication  is a Banach algebra. 

For each $f \in L^1(H)$ and $\xi \in L^p(H)$ ($1\leq p \leq \infty)$, it is known that $f *_\lambda \xi \in L^p(H)$ with $\norm{f *_\lambda \xi}_p \leq \norm{f}_1 \norm{\xi}_p$. Define the operator $\lambda_f$ by $\lambda_f(\xi)=f*_\lambda\xi$ for every $f \in L^1(H)$ and $\xi \in L^2(H)$. The von Neumann algebra constructed by $\{\lambda_f: f\in L^1(H)\}$ in ${\cal B}(L^2(H))$ is denoted by $\VN(H)$  and called the \ma{von Neumann} algebra of $H$.
By \cite[Theorem~2.19]{mu1},   $A(H)$ is the predual of $\VN(H)$.
The $C^*$-algebra generated by $\{\lambda_f: f\in L^1(H)\}$ in ${\cal B}(L^2(H))$ is called the \ma{reduced $C^*$-algebra} of $H$ and denoted by $C^*_\lambda(H)$ which is a $C^*$-subalgebra of $VN(H)$.  Moreover, $A(H)$ can be considered as a subalgebra of $B_\lambda(H)$ where $B_\lambda(H)$ is the dual of $C^*_\lambda(H)$. Note that for  $f \in L^1(H) \subseteq C^*_\lambda(H)$  and $u \in B_\lambda(H)$, the dual product is just integration of the pointwise product $uf$ with respect to $\lambda$.
In this paper we rely on the following lemma which we present from \cite{ma} without its proof.

\begin{lemma}\label{l:A(H)-properties}\cite[Lemma~3.4]{ma}\\
Let $H$ be a hypergroup, $K$ a compact subset  of $H$ and $U$ an open subset of $H$ such that $K\subset U$. Then for each relatively compact open set $V$  such that $\overline{K *V*\check{V}} \subseteq U$,  $u_V:={\lambda(V)}^{-1} 1_{K*V} *_\lambda  \tilde{1}_{V}$ belongs to $A(H) \cap C_c(H)$. Also $u_V(H)\geq 0$, $u_V|_K\equiv 1$, $\supp(u_V) \subseteq U$, and 
\[
\norm{u_V}_{A(H)} \leq \left(\frac{ \lambda(K*V)}{{\lambda(V)}}\right)^{\frac{1}{2}}.
\]
\end{lemma}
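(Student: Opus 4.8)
The plan is to read $u_V = h_H(V)^{-1}\,1_{K*V}*_h\tilde{1}_V$ as a normalized convolution of two square-integrable functions, and to obtain membership in $A(H)$ together with the norm bound directly from the description of the Fourier space as convolutions of $L^2$-functions. First I would record that, since $K$ is compact and $V$ is relatively compact, the product $K*V$ is relatively compact (a product of two relatively compact sets is relatively compact in a hypergroup), so $h_H(K*V)<\infty$ and both $1_{K*V}$ and $1_V$ lie in $L^2(H)$. Using the fact recalled in this section, that $f*_h\tilde g\in A(H)$ with $\norm{f*_h\tilde g}_{A(H)}\le\norm{f}_2\,\norm{g}_2$ for $f,g\in L^2(H)$, I would take $f=1_{K*V}$ and $g=1_V$ and compute
\[
\norm{u_V}_{A(H)}\le h_H(V)^{-1}\norm{1_{K*V}}_2\,\norm{1_V}_2 = h_H(V)^{-1}\,h_H(K*V)^{1/2}\,h_H(V)^{1/2}=\Big(\tfrac{h_H(K*V)}{h_H(V)}\Big)^{1/2},
\]
which is the asserted estimate.

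Next I would settle the structural claims. The support bound $\supp(f*_hg)\subseteq\overline{\supp f * \supp g}$, applied to $f=1_{K*V}$ and $g=\tilde{1}_V$, gives $\supp(u_V)\subseteq\overline{(K*V)*\check V}=\overline{K*V*\check V}\subseteq U$; since this closure is compact and a convolution of two compactly supported $L^2$-functions is continuous, $u_V\in C_c(H)$. Positivity, $u_V\ge0$, is immediate once each value is written as the integral $u_V(x)=h_H(V)^{-1}\int_H 1_{K*V}(t)\,(\delta_{\tilde t}*\delta_x)(\tilde{1}_V)\,dh(t)$ of nonnegative quantities, because each measure $\delta_{\tilde t}*\delta_x$ is positive and $\tilde{1}_V\ge0$.

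The heart of the argument is the identity $u_V|_K\equiv1$, and this is where I would spend the effort. Fix $x\in K$. The key step is to simplify the inner factor using the involution axiom (H5)(ii): because $\tilde{1}_V=\check{1_V}$ and the involution reverses $\delta_{\tilde t}*\delta_x$ to $\delta_{\tilde x}*\delta_t$, one obtains
\[
(\delta_{\tilde t}*\delta_x)(\tilde{1}_V)=(\delta_{\tilde x}*\delta_t)(1_V)=L_{\tilde x}1_V(t),
\]
so that $(1_{K*V}*_h\tilde{1}_V)(x)=\int_H 1_{K*V}(t)\,L_{\tilde x}1_V(t)\,dh(t)$. The next point is that the factor $1_{K*V}(t)$ may be dropped: whenever $L_{\tilde x}1_V(t)=(\delta_{\tilde x}*\delta_t)(1_V)>0$ there is $v\in V$ with $v\in\tilde x*t$, and the reversibility of hypergroup convolution (a consequence of (H5) and (H6)) then forces $t\in x*v\subseteq K*V$, so $1_{K*V}(t)=1$ on the whole region where the integrand is nonzero. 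Finally, left invariance of the Haar measure yields
\[
(1_{K*V}*_h\tilde{1}_V)(x)=\int_H L_{\tilde x}1_V(t)\,dh(t)=h_H(L_{\tilde x}1_V)=h_H(1_V)=h_H(V),
\]
whence $u_V(x)=1$.

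I expect the main obstacle to be precisely the bookkeeping in this last step: converting $(\delta_{\tilde t}*\delta_x)(\tilde{1}_V)$ into a left translate of $1_V$ through the involution, and then justifying that $1_{K*V}$ is identically $1$ on the support of the integrand by the reversibility relation together with $x\in K$. Everything else --- the $A(H)$-membership, the norm estimate, the support and continuity, and positivity --- follows formally from the already-cited structure of the Fourier space and the basic support and positivity properties of hypergroup convolution.
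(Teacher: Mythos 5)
The paper states this lemma without proof, importing it verbatim from \cite{ma}, so there is no internal argument to compare against; your reconstruction is correct and follows exactly the standard Leptin-type route that the cited proof takes. All three pillars of your argument are sound and are the expected ones: the membership and norm bound $\norm{f*_h\tilde g}_{A(H)}\le \norm{f}_2\norm{g}_2$ from the $L^2$-convolution description of the Fourier space (Proposition~\ref{p:Fourier-of-hypergroups}), the support-reversibility fact $v\in \tilde{x}*t \Rightarrow t\in x*v$ obtained from (H5)--(H6) (the same device the paper itself uses in the proof of Proposition~\ref{p:L^1-C_0}), and the left invariance of the Haar measure applied to $L_{\tilde{x}}1_V$, which together give $u_V|_K\equiv 1$.
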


\begin{rem}\label{r:existence-of-the-V}
For each pair $K,U$ such that $K \subset U$,  we can always find  a relatively compact neighborhood  $V$ of the identity, $e$,  that satisfies the conditions in Lemma~\ref{l:A(H)-properties}. The existence of such $V$ is a result of continuity of the mapping $(x,y)\mapsto x*y$ with respect to the locally compact topology of $H\times H$ into the Michael topology on ${\textfrak C}(H)$ (see \cite{bl}). Since $H$ is locally compact, there exists some relatively compact open set $W$ such that $K\subseteq W \subseteq \overline{W} \subseteq U$; $K\in {\textfrak C}_{H\setminus \overline{W}}(W)$ as an open set in the Michael topology and consequently for each $x\in K$, $x*e\in {\textfrak C}_{H\setminus \overline{W}}(W)$. Since, the mapping $e \rightarrow x*e$ is continuous, there is some neighborhood $V_1^x$ of $e$ such that for each $y\in V_1^x$, $x*y\in {\textfrak C}_{H\setminus \overline{W}}(W)$ i.e. $x*y \subseteq W$ and $x*y \cap H\setminus \overline{W}=\emptyset$. Let us define 
$V^{(1)}=\cup_{x\in K} (V_1^x \cap \check{V}_1^x)$.
Clearly, $\check{V}^{(1)}=V^{(1)}$. Moreover, 
$
K*V^{(1)}=\cup_{y\in V^{(1)}}\cup_{x\in K} x*y \subseteq \cup_{x\in K} x*V_1^x \subseteq W
$
and $K*V^{(1)} \cap H\setminus \overline{W}=\emptyset$ since $(x*y) \cap (H\setminus  \overline{W})=\emptyset$ for all $x\in K$ and $y\in V^{(1)}$. Now let us replace $K$ by the compact set $\overline{K*V^{(1)}}$. Similar to the previous argument, for some relatively compact open set $W'$ such that $\overline{K*V^{(1)}} \subseteq W'\subseteq \overline{W'} \subseteq U$, one can find some $V^{(2)}$ a neighborhood of $e$ such that $V^{(2)}=\check{V}^{(2)}$, $\overline{K*V^{(1)}}*V^{(2)} \subseteq W'$, and $( \overline{K * V^{(1)}} * V^{(2)}) \cap (H\setminus \overline{W'} )=\emptyset$. Hence, for the relatively compact open set $V:=V^{(1)}\cap V^{(2)}$, one gets that $V=\check{V}$ and
$
{K*V*\check{V}} \subseteq \overline{K*V^{(1)}} * V^{(2)} \subseteq \overline{W'}$.
So $\overline{K * V* \check{V}} \subseteq U$.
\end{rem}

In \cite{mu1}, Muruganandam showed that when $H$ is commutative, $A(H)=\{\xi *_\lambda  \tilde{\eta}:\; \xi, \eta\in L^2(H)\}$
 and  $\norm{u}_{A(H)}=\inf  \norm{\xi}_2 \norm{\eta}_2$ for all $\xi, \eta \in L^2(H)$ such that  $u=\xi *_\lambda  \tilde{\eta }$.
 A similar characterization is known for the Fourier space of compact hypergroups, \cite{vr}.
The following implies that this fact is true for all unimodular hypergroups.

\begin{prop}\label{p:Fourier-of-hypergroups}
Let $H$ be a unimodular hypergroup. Then $A(H)=\{\xi*\tilde{\eta}: \xi, \eta \in L^2(H)\}$ and $\norm{u}=\inf\{\norm{\xi}_2\norm{\eta}_2\}$ over all $\xi, \eta \in L^2(H)$ with  $u=\xi *\tilde{\eta}$ where the infimum is  attained.
\end{prop}

A reference for this  characterization of  Fourier algebras on  locally compact groups is the Master's thesis of Zwarich, \cite{zw-the}. Chapter~4   is dedicated to this proof for locally compact groups based on an observation by Haagerup, \cite{haa}, and the von Neumann theory developed in \cite{dix2}. The proof, in \cite{zw-the}, is written based on the properties of von Neumann algebras which are in the \ma{standard form} as defined in \cite{haa}. This argument can be used easily for unimodular  hypergroups as well. 
Before we present the proof, let us recall that every element  $u\in A(H)$ acts $\sigma$-weak continuously on $\VN(H)$; hence,  $u$ is actually a \ma{normal} linear functional on $\VN(H)$, \cite[Section~3.3]{zw-the}.  Note that for non-unimodular hypergroups, condition (v) of \cite[Definition~4.3.3]{zw-the} may not be satisfied. 

\vskip0.75em

\begin{proof}
For $\xi, \eta \in C_c(H)$, define $\xi^*(x):= {\overline{\xi(\tilde{x})}}$, $ \xi^{\sharp}(x):= \xi(x)$ and the canonical inner product 
\[
\langle \xi, \eta \rangle := \int_H \xi(x) \overline{\eta(x)} dx. 
\]
Then $C_c(H)$ forms a \ma{quasi-Hilbert algebra}  as defined in \cite[Definition~4.3.3]{zw-the}. One can re-write the proof of \cite[Proposition~4.3.4]{zw-the}  with appropriate modifications. Obviously the Hilbert space generated by $C_c(H)$ would be $L^2(H)$. Further, the second commutant of $\{\lambda_f: f \in C_c(H)\}$ is $\VN(H)$, (see \cite[Remark~2.18]{mu1}). Therefore, by \cite[Theorem~4.3.11]{zw-the}, $\VN(H)$ is in standard form. Hence, one  applies  \cite[Theorem~4.3.16]{zw-the}    to the elements of $A(H)$ as normal linear functionals  on $\VN(H)$  and conclude that $A(H)=\{\xi*\tilde{\eta}: \xi, \eta\in L^2(H)\}$ and the condition of the norm is satisfied. 
\end{proof}

\end{section}

 

\begin{section}{F\o lner type conditions on Hypergroups}\label{ss:Folner-condition-on-^G}


 Amenable locally compact groups are characterized by a variety of
properties including F\o lner type conditions.
These conditions relate the concept of ``amenability" (which is an algebraic notion on the group algebra) to some structural properties of the underlying group.
 In this section, we look at a generalization of  F\o lner type conditions over hypergroups.

 In \cite{ma}, the author introduced the Leptin condition for hypergroups. Here, we define more F\o lner type conditions for hypergroups and we study their relations.  
 For each two subsets $A$ and $B$ of some set $X$, we denote their symmetric difference, $(A\setminus B) \cup (B\setminus A)$, by $A\triangle B$.

\begin{dfn}\label{d:Folner-Leptin-condition}
Let $H$ be a hypergroup and $D\geq 1$ an integer. 
\begin{itemize}
\item[$(L_D)$]{We say that $H$ satisfies the \ma{$D$-Leptin condition} if for every  compact subset $K$ of $H$ and $\epsilon>0$, there exists a measurable set $V$ in $H$ such that $0<\lambda(V)<\infty$ and $\lambda(K*V)/\lambda(V) < D + \epsilon$.}
\item[$(F)$]{ We say that $H$ satisfies the \ma{F\o lner condition} if for every  compact subset $K$ of $H$ and $\epsilon>0$, there exists a measurable set $V$ in $H$ such that
$0<\lambda(V)<\infty$ and $\lambda (x*V\triangle V)/\lambda(V) < \epsilon$ for every $x\in K$.}
\item[$(SF)$]{ We say that $H$ satisfies the \ma{Strong F\o lner condition} if for every  compact subset $K$ of $H$ and $\epsilon>0$, there exists a measurable set $V$ in $H$ such that
$0<\lambda(V)<\infty$ and $\lambda(K*V\triangle V)/\lambda(V) < \epsilon$.}
\end{itemize}
\end{dfn}

Immediately, every compact hypergroup $H$ satisfies all conditions $(SF)$, $(F)$, and $(L_1)$.
The proof is a direct result of finiteness of the Haar measure on compact hypergroups, \cite{bl}, by setting  $V=H$ for all conditions in Definition~\ref{d:Folner-Leptin-condition}.

\begin{rem}\label{r:relatively-compact}
In the definition of  the $D$-Leptin condition, $(L_D)$,  one can suppose that $V$ is compact without loss of generality. To show this,  suppose that $H$ satisfies the $D$-Leptin condition. For compact subset $K$ of $H$ and $\epsilon>0$, there exists a measurable set $V$ such that  $\lambda(K*V)/\lambda(V) < D + \epsilon$. Using regularity of $\lambda$, as a measure, for each positive integer $n$, we can find compact set $V_1\subseteq V$ such that $\lambda(V\setminus V_1) < \lambda(V)/n$. This implies that $0< \lambda(V_1)$ and  $ \lambda(V)/\lambda(V_1) < {n}/{(n-1)}$. Therefore
\[
\frac{\lambda(K*V_1)}{\lambda(V_1)} \leq \frac{\lambda(V)}{\lambda(V_1)} \left( \frac{\lambda(K*V_1)}{\lambda(V)}\right)< \frac{n}{n-1}(D+\epsilon).
\]
\end{rem}

\begin{proposition}\label{p:Strong-Folner-implies-Folner-&-Leptin}
For every hypergroup $H$,   $(SF)$  implies  $(L_1)$. Further, if $H$ is discrete,   $(F)$ implies $(SF)$ and consequently $(F)$ implies $(L_1)$.
\end{proposition} 

\begin{proof}
$(SF) \Rightarrow (L_1)$. For a compact set $K$ and $\epsilon>0$, let $V$ be a measurable set such that $\lambda(K*V \triangle V) < \epsilon \lambda(V)$.
Hence
\begin{eqnarray*}
 \frac{\lambda(K*V)}{\lambda(V)} -1 &=& \frac{\lambda(K*V)  - \lambda(V)}{\lambda(V)}\\
&\leq&  \frac{\lambda(K*V) + \lambda(V) -2 \lambda((K*V) \cap V)}{\lambda(V)} = \frac{\lambda((K*V)\triangle V)}{\lambda(V)} <\epsilon .
\end{eqnarray*}

$(F) \Rightarrow (SF)$.  Suppose that $H$ is discrete. 
Let $K$ be a non-empty finite subset of $H$; $K= \{x_i\}_{i=1}^n$. Therefore, for each $\epsilon>0$ there is a finite set $V$ such that $0<\lambda(V)$ and 
\[
\frac{\lambda((x*V)\triangle V) }{\lambda(V)}<\frac{\epsilon}{|K|}\ \ \ (x\in K).
\]
So
\begin{eqnarray*}
\frac{\lambda( (\bigcup_{i=1}^n x_i) *V \triangle V)}{\lambda(V)}  =  \frac{\lambda(\bigcup_{i=1}^n ( x_i *V) \triangle V)}{\lambda(V)}\leq  \sum_{i=1}^n  \frac{\lambda(x_i*V \triangle V)}{\lambda(V )} \ \ \ = \epsilon.
\end{eqnarray*} 
The last inequality is a result of the following inclusion about arbitrary sets $B_1, B_2, C$:
\begin{eqnarray*}
((B_1\cup B_2)\triangle C) \subseteq \big(B_1 \triangle C)\big) \cup \big(B_2\triangle C)\big).
\end{eqnarray*}
\end{proof}

If $H$ is a locally compact group, all the conditions $(F)$, $(SF)$, and $(L_1)$ are equivalent to each other and to   amenability of the group $H$. 
Trying to adapt the rest of the relations  between $(F)$, $(SF)$, and $(L_1)$ from the group case, \cite{pi},  one may notice that in almost all of the arguments, the inclusion $x(A\setminus B)\subseteq xA \setminus xB$ is crucially applied  where $A,B$ are subsets of the group $H$ and $x$ is one arbitrary element. This inclusion  is not necessarily true for a general hypergroup though.

In \cite{la4} the notion of summing sequences in the context of polynomial hypergroups is introduced to study some F\o lner type properties. It is straightforward to check that a polynomial hypergroup $\Nat_0$ with a summing sequence $(A_n)_{n\in \Bbb{N}_0}$  satisfies all the { $1$-Leptin}, { Strong F\o lner}, and F\o lner conditions.

\begin{subsection}{$D$-Leptin condition on  dual of compact groups}\label{ss:Leptin-number}

Let $G$ be a compact group and $\Irr(G)$ the set of all equivalent classes of irreducible unitary representations of $G$. It is known that $\Irr(G)$ forms a discrete commutative regular Fourier hypergroup with a Haar measure $\lambda$ where $\lambda(\pi)=d_\pi^2$ for $d_\pi$ the dimension of the representation $\pi$, look at \cite[1.1.14]{bl}.
In \cite[Section~4]{ma}, it was shown that for $G=\SU(2)$, the compact group of $2\times 2$   unitary matrices,   $\Irr(G)$ satisfies the  $1$-Leptin condition. Here we show that  more compacts groups  enjoy these conditions.
The  proof  of the following proposition is similar to \cite[Proposition~4.4]{ma} so it is omitted here.

\begin{proposition}\label{p:Leptin-of-product-groups}
Let $G=\prod_{i\in\Ind} G_i$ for a family of compact groups $(G_i)_{i\in \Ind}$ such that for each $i\in\Ind$, $\widehat{G}_i$ satisfies the $D_i$-Leptin condition. Then if $D:=\prod_{i\in \Ind} D_i$ exists, $\Irr(G)$ satisfies the $D$-Leptin condition.  
\end{proposition}

Let $G$ be a connected simply connected compact real Lie group. Then, $\Irr(G)$, as the dual object of a compact Lie group,  forms a finitely generated hypergroup. Suppose that $F$ is a finite generator of $\Irr(G)$;
therefore, by \cite[Theorem~2.1]{ve}, there exists positive integers $0<\alpha,\beta <\infty$ such that
\begin{equation}\label{eq:growth-in-dual-of-Lie-groups}
\alpha \leq \frac{\lambda(F^k)}{k^{d_G}} \leq \beta
\end{equation}
for all $k\in\Bbb{N}$  where $d_G$ is the dimension of  the group $G$ as a Lie group over $\Bbb{R}$.  This estimation for the growth rate of $\Irr(G)$  leads to the  $D$-Leptin condition for $\Irr(G)$ as follows.

\begin{cor}\label{c:D-Leptin-of-simply-connected-Lie}
Let $G$ be a connected simply connected compact real Lie group. Then $\Irr(G)$, as a hypergroup, satisfies the $D$-Leptin condition for some $D\geq 1$.
\end{cor}

\begin{proof}
Let $K$ be a finite subset of $\Irr(G)$. 
Suppose that $F$ is a finite generator of $\Irr(G)$. For some $k\in \Bbb{N}$, $K\subseteq F^k$.  By applying (\ref{eq:growth-in-dual-of-Lie-groups}),  we have
\begin{eqnarray*}
\limsup_{\ell\rightarrow \infty} \frac{\lambda(K*F^\ell)}{\lambda(F^\ell)} &\leq& \limsup_{\ell\rightarrow\infty}\frac{\lambda(F^{\ell+k})}{\lambda(F^\ell)} =
 \limsup_{\ell \rightarrow \infty} \frac{\lambda(F^{\ell+k})}{(\ell+k)^{d_{{G}}}} \;  \frac{\ell^{d_{{G}}}}{\lambda(F^\ell)}\; \frac{(\ell+k)^{d_G}}{\ell^{d_G}}\leq \beta/\alpha.
\end{eqnarray*}
\end{proof}

Let  $\operatorname{SU}(3)$ denote the compact group of $3\times 3$  unitary matrices which is  a connected simply connected compact real  Lie group. 
Here we apply some studies on the representation theory of real connected Lie groups to find a concrete answer for the $D$-Leptin  property of $\Irr(\SU(3))$.
 The idea of the proof is similar to the one in the proof of \cite[Theorem~2.1]{ve}.
 
\begin{proposition}\label{p:Leptin-number-of-SU(3)}
The hypergroup $\Irr(\SU(3))$ satisfies the $18240$-Leptin condition.
\end{proposition}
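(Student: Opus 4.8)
The plan is to specialize the proof of Theorem~\ref{t:D-Leptin-of-simply-connected-Lie} to $\operatorname{SU}(3)$, replacing the qualitative estimate (\ref{eq:growth-in-dual-of-Lie-groups}) by explicit constants. First I would assemble the data: $\operatorname{SU}(3)$ is connected, simply connected, compact, of real dimension $d_{\operatorname{SU}(3)}=8$; its irreducible unitary representations are indexed by highest weights $(p,q)\in\mathbb{N}_0^2$; Weyl's dimension formula gives $d_{(p,q)}=\tfrac12(p+1)(q+1)(p+q+2)$; and by (\ref{eq:hypergroup-convolution-on-^G}) the Haar weight on the dual hypergroup is $h(p,q)=d_{(p,q)}^2$. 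As the finite symmetric generator I would take $F=\{\pi_0,(1,0),(0,1)\}$: the trivial representation together with the defining representation and its conjugate.

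The first substantive step is to identify the convolution powers exactly, the claim being $F^k=\{(p,q):p+q\le k\}$. I would argue this with the level function $\ell(p,q):=p+q$. Each generator has level at most $1$, and for type $A_2$ the positive roots $\alpha_1,\alpha_2,\alpha_1+\alpha_2$ (which read $(2,-1),(-1,2),(1,1)$ in the fundamental-weight basis) all have level at least $1$; since the highest weight of any irreducible constituent of $V_{\lambda_1}\otimes V_{\lambda_2}$ is $\lambda_1+\lambda_2$ minus a nonnegative integer combination of simple roots, the level is subadditive under the convolution (\ref{eq:hypergroup-convolution-on-^G}), which gives $F^k\subseteq\{p+q\le k\}$. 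The reverse inclusion is immediate: $V_{(p,q)}$ is the Cartan component of $(1,0)^{\otimes p}\otimes(0,1)^{\otimes q}$, a product of $p+q\le k$ generators, padded to length $k$ with copies of $\pi_0$. Hence $F^k$ is precisely the triangular block of weights of level at most $k$.

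With $F^k$ identified, the task reduces to trapping the weight sum
\[
h(F^k)=\sum_{p+q\le k}d_{(p,q)}^2=\tfrac14\sum_{p+q\le k}(p+1)^2(q+1)^2(p+q+2)^2
\]
between $\alpha k^{8}$ and $\beta k^{8}$ for every $k\ge1$. The natural uniform upper constant is $\beta=19$, the value of $h(F^k)/k^8$ at $k=1$ (where the sum is $1+9+9$), which dominates all later scales. For the lower constant one verifies an estimate of the form $h(F^k)\ge\alpha k^8$ with $\alpha=1/960$; this sits safely below the asymptotic density $1/192$ of the sum and so holds for all $k$. Granting these bounds, the computation in Theorem~\ref{t:D-Leptin-of-simply-connected-Lie} applies verbatim: for a finite $K\subseteq F^k$ one has $K*F^\ell\subseteq F^{k+\ell}$, so
\[
\limsup_{\ell}\frac{h(K*F^\ell)}{h(F^\ell)}\le\limsup_{\ell}\frac{h(F^{k+\ell})}{h(F^\ell)}\le\frac{\beta}{\alpha}=18240,
\]
which is exactly the $18240$-Leptin condition.

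I expect the only genuine obstacle to be this last numerical step: producing closed-form bounds on the degree-eight sum $\sum_{p+q\le k}(p+1)^2(q+1)^2(p+q+2)^2$ that are honest for every $k\ge1$ and not merely asymptotic, and whose ratio is exactly $18240$. The representation theory feeding in—Weyl's formula and the $A_2$ level bound for tensor products—is routine, so the heart of Proposition~\ref{p:Leptin-number-of-SU(3)} is the explicit bookkeeping that converts the crude $\limsup\le\beta/\alpha$ estimate of the general theorem into the advertised value.
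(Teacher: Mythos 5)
Your route is the paper's route: take the fundamental representations as generator, identify $F^k$ with the triangular block $\{(p,q):p+q\le k\}$, use Weyl's formula $d_{(p,q)}=(p+1)(q+1)(p+q+2)/2$ and $h=d^2$, trap $h(F^k)/k^8$ between $\alpha=1/960$ and $\beta=19$ for \emph{every} $k$, and then run the $\limsup$ computation of Theorem~\ref{t:D-Leptin-of-simply-connected-Lie} to get $D=\beta/\alpha=18240$. (Your level-function argument for $F^k=\{p+q\le k\}$ is fine, indeed more carefully justified than in the paper.) The genuine gap is in your justification of the lower bound, which you correctly flag as the crux and then dispatch with a false premise. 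The asymptotic density of the sum is not $1/192$: one computes
\[
\lim_{k\to\infty}\frac{h(F^k)}{k^8}
=\frac{1}{4}\iint_{u,v\ge 0,\ u+v\le 1}u^2v^2(u+v)^2\,du\,dv
=\frac{1}{4}\cdot\frac{1}{240}=\frac{1}{960},
\]
so $\alpha=1/960$ is \emph{exactly} the limiting value, with no safety margin at all. Hence the inference ``$\alpha$ sits safely below the asymptotic density, so $h(F^k)\ge\alpha k^8$ holds for all $k$'' fails twice over: the number is wrong, and even with the correct density, a constant at (or below) the limit of a sequence need not bound the sequence at every finite $k$ unless one knows the sequence converges to its limit from above.

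That convergence from above is precisely what the paper's proof establishes and yours does not. The paper sums $h(S_j)$ over the levels $j\le n$ in closed form and obtains
\[
\frac{h(F^n)}{n^8}= \frac{1}{n^8}+\frac{3 n^7+60 n^6+518 n^5+2520 n^4+7547 n^3+14220 n^2+16412 n+10560}{2880 n^7},
\]
in which every coefficient is positive; therefore $h(F^n)/n^8>3/2880=1/960$ strictly for all $n$, and since each term is non-increasing in $n$, the maximum value $19$ is attained at $n=1$. Note that your upper bound ($\beta=19$ at $k=1$ ``dominates all later scales'') is likewise only asserted in your sketch, and it too drops out of this explicit formula. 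So to complete your argument you must actually carry out the polynomial summation (or otherwise prove that $h(F^k)/k^8$ decreases to $1/960$); without that, the inequality $h(F^k)\ge k^8/960$ for all $k\ge 1$ --- the inequality on which the advertised constant $18240=19\cdot 960$ entirely rests --- is unsupported.
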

 
 \begin{proof}
 Let us follow \cite{bour9} in the notation and basic facts about compact Lie groups $G$ and in particular $G=\SU(3)$. 
  Let  the set of  all fundamental weights $\beta$ be denoted by $B$. Then we have  $(\beta| {\beta'}) = 0$ for $\beta\neq \beta'$ while $ (\beta|\beta) > 0$ for all $\beta, \beta'\in B$. Taking highest weights induces an identification between the set $\Irr(G)$ of classes of irreducible unitary representations and the set of dominant weights $X_{++}$  which are all $( p_\beta \beta)_{\beta}$ for $p_\beta \in \Bbb{N}_0=\{0,1,2,\ldots\}$. From now on, without loss of generality, we denote the element $\pi \in \Irr(G)$  by its corresponding multipliers in $X_{++}$ that is $\pi=( p_\beta)_{\beta}$.
It is  known  that in the case of connected simply connected compact real Lie groups, the set $F$, of representations $\delta_{\beta_0}$ which is $\delta_{\beta_0}=( p_\beta)_{\beta}$ where $p_\beta=0$ for all $\beta\neq \beta_0$ and $p_{\beta_0}=1$, forms a generator of $\Irr(G)$. 
Further, one may define a mapping $\tau : \Irr(G) \rightarrow \Bbb{N}_0$ where $\tau( \pi)=\sum_\beta p_\beta$ such that for each $\pi=( p_\beta)_{\beta}$, $\pi$  belongs to $F^{\tau(\pi)} \setminus F^{\tau(\pi)-1}$.
The dimension of $\pi=(p_\beta)_\beta$ is given by Weyl's formula 
\[
d_\pi = \prod_{\alpha \in R^+} \left( 1 +\frac{\sum_{\beta} p_\beta (\beta|\alpha)}{(\rho | \alpha)}\right)
\]  
where $\rho$ is the sum of the fundamental dominant weights where $(\rho,\beta)=1$ for each $\beta$.

Restricting this observation to the case  $G=\operatorname{SU}(3)$, one gets that $\Irr(G)$ is nothing but the set of all $\pi=(p,q)$ where $p,q\in \Bbb{N}_0$ while $d_\pi=(p+1)(q+1)(2+p+q)/2$.  Also according to  the finite generator $F=\{(0,1),(1,0)\}$, one gets that $S_k:=F^k\setminus F^{k-1}$ is nothing but the set $\{(k,k-j)\}_{j=0}^k$. Hence, based on some  straightforward computations, one gets that
\[
 \lambda(S_k)=\sum_{j=0}^k \frac{(j+1)^2 (k-j+1)^2 (k+2)^2}{4}.
\]
One may use this fact that $F^k=\sum_{j=0}^k S_j$ to get
\[
\frac{\lambda(F^n)}{n^8}= \frac{1}{n^8}+\frac{3 n^7+60 n^6+518 n^5+2520 n^4+7547 n^3+14220 n^2+16412 n+10560}{2880 n^7}.
\]
Therefore, $ {1}/{960} <  {\lambda(F^n)}/{k^n}  \leq 19$.
 Now the argument mentioned in the proof of Corollary~\ref{c:D-Leptin-of-simply-connected-Lie} implies that $\Irr(G)$ satisfies the $18240$-Leptin condition.
 \end{proof}

\begin{rem}
In \cite{ma-the}, the author applied a study on the tensor decomposition of irreducible representations of $\operatorname{SU}(3)$, \cite{we}, to show that $\Irr(\SU(3))$ satisfies  $3^8$-Leptin condition which is   smaller than  the amount found in in Proposition~\ref{p:Leptin-number-of-SU(3)}.  However, since the proof of Proposition~\ref{p:Leptin-number-of-SU(3)} has  structural details which allow similar computations for other $\SU(n)$'s, we found it interesting, (although these doable computations would be tedious   even  for $n$ as small as $3$). 
\end{rem}

\end{subsection}

\end{section}

\vskip2.0em

\begin{section}{Bounded approximate identity of Fourier algebra}\label{ss:bai-of-A(H)-Leptin-condition}
 
Let  $H$  be a  regular Fourier hypergroup and  $D\geq 1$.   By $(B_D)$ we mean  the existence of an  approximate identity of $A(H)$ whose $\norm{\cdot}_{A(H)}$-norm is bounded by   $D$  which we  call    a \ma{$D$-bounded approximate identity}.

A    commutative hypergroup $H$ is called a \ma{strong hypergroup} if   $\wH$, as the dual of $H$, is a hypergroup whose Haar measure corresponds the Plancherel measure. A strong  hypergroup $H$ is  regular Fourier  and satisfying $(B_1)$.  This observation is based on this fact that $A(H)$ is isometrically Banach algebra isomorphic to the hypergroup algebra $L^1(\wH)$ through the Fourier transform, \cite[Proposition~4.2]{mu1}, and since it is known that hypergroup algebras have $1$-bounded approximate identities. 

In this section we study $(B_D)$ for general regular Fourier hypergroups.

\begin{proposition}\label{p:Leptin->bai of A(H)}
Let $H$ be a regular Fourier hypergroup which satisfies the $D$-Leptin condition, $(L_D)$, for some $D\geq 1$. Then $A(H)$ has a $D$-bounded approximate identity, $(B_D)$.
\end{proposition}

\begin{proof}
Fix $\epsilon>0$. Using the $D$-Leptin condition on $H$,  for every arbitrary non-void compact set $K$ in $H$, we can find a finite subset $V_K$ of $H$ such that $\lambda(K*V_K)/\lambda(V_K)<D^{2}(1+\epsilon)^2$. Using Lemma~\ref{l:A(H)-properties},  for 
\[
v_K:=\frac{1}{\lambda(V_K)} 1_{K *V_K} *_\lambda   \tilde{1}_{V_K}
\] 
we have  $\norm{v_K}_{A(H)} < D(1+\epsilon)$ and $v_{K}|_{K}\equiv 1$. Define for each pair $(K,\epsilon)$, $a_{\epsilon, K}=(1+\epsilon)^{-1}v_K$. 
\noindent We consider the net 
$\{a_{\epsilon,K}: K\subseteq H\ \text{compact, and $0<\epsilon<1$}   \}$ in $A(H)$ where $a_{\epsilon_1,K_1} \preccurlyeq a_{\epsilon_2,K_2}$ whenever $v_{K_1}v_{K_2} = v_{K_1}$ and $\epsilon_2 < \epsilon_1$.
So $(a_{\epsilon,K})_{0<\epsilon<1, K\subseteq H}$ forms a $\norm{\cdot}_{A(H)}$-norm $D$-bounded net in $A(H) \cap C_c(H)$.
 Let $u\in A(H)\cap C_c(H)$ with $K=\supp(u)$. Then $v_K u= u$. Therefore, 
  $(a_{\epsilon,K})_{0<\epsilon<1, K\subseteq H}$ is a $D$-bounded approximate identity of $A(H)$.
\end{proof}

Reiter's condition $(P_p)$ $(1\leq p < \infty$)  for  hypergroups first was  defined  in \cite{sk}.
For $1\leq p <\infty$, we say that $H$ satisfies $(P_p)$,  if whenever $\epsilon > 0$ and a compact set $K \subseteq H$ are
given, then there exists  $\xi\in  L^p(H)$ so that $\xi \geq 0$, $\norm{\xi}_p=1$, and 
$
\norm{\delta_x*\xi - \xi}_r <\epsilon$ for every $x\in K$.
For every hypergroup $H$, $(P_1)$ is equivalent to the amenability of $H$. Although $(P_2)$ implies $(P_1)$, \cite{sk},
 $(P_2)$ is not necessarily equivalent to the amenability of the hypergroup. 
Singh, \cite[Proposition~4.4.3]{singh-mem}, showed that if a hypergroup $H$ satisfies the ($1$-)Leptin condition, it satisfies $(P_p)$  for any $p\in [1,\infty]$. For Reiter's conditions on Fusion algebras and their consequences look at \cite{izu}.

In the following we crucially rely on \cite[Lemma~4.4]{sk} which proves that  $H$ satisfies $ (P_2)$ if and only if 
there is a net $(\xi_\alpha)_\alpha\subseteq L^2(H)$ such that $\norm{\xi_\alpha}_2=1$ and $\xi_\alpha* \tilde{\xi}_\alpha$
converges to $1$ uniformly on compact subsets of $H$.
Indeed, $(P_2)$ implies the existence of a net $(u_\alpha)$  (in the form of $u_\alpha:=\xi_\alpha*\tilde{\xi}_\alpha$)  in  $A(H)$ with $\norm{u_\alpha}_{A(H)} \leq \norm{\xi_\alpha}_2^2 =1$ which converges to $1$ uniformly on compact sets. 

\vskip1.0em

To prove the main  result of this section, we need the following lemma. 
 The  proof is inspired by  \cite[Corollary~3.5]{ru-2d}.
 
\begin{lem}\label{l:ideal}
Let $H$ be a regular  Fourier hypergroup. Then $A(H)$ is an  ideal in its second dual if  $H$ is discrete.
\end{lem}
 
\begin{proof}
First let $H$ be a discrete hypergroup. 
Let $J$ be the closed subspace of $A(H)$ of all functions $u$ such that $v\mapsto vu$ ($A(H) \rightarrow A(H)$) is weakly compact. 
Note that for each $u\in L^2(H)\subseteq A(H)$,   $\norm{uv}_{A(H)} \leq \norm{u}_2\norm{v}_{A(H)}$, hence the mapping $v\mapsto vu$ ($A(H) \rightarrow L^2(H)$) is weakly compact, because  $L^2(H)$, as a Hilbert space, is reflexive. Also note that $\norm{\cdot}_{A(H)}\leq \norm{\cdot}_2$; hence, the inclusion $L^2(H) \hookrightarrow A(H)$ is a norm decreasing mapping. Therefore, combining these two mappings, for each $u\in L^2(H)$, $v \mapsto v u$ ($A(H) \rightarrow A(H)$) is weakly compact.
Hence, $L^2(H) \subset J$ while $L^2(H)$ is dense in $A(H)$ (as $c_c(H)$ is dense in   $A(H)$). Therefore, $J=A(H)$. But \cite[Proposition~1.4.13]{pa1} says that such a Banach algebra is an ideal in its second dual. 
\end{proof}

 The converse of Lemma~\ref{l:ideal} is also true. A proof based on character amenability (of Fourier algebras of hypergroups) from the unpublished manuscript \cite{ma10} was drawn to our attention by M. Nemati.

\vskip1.0em

The following theorem resembles   the Leptin theorem for  discrete regular Fourier hypergroups. 
In the proof, some techniques of the  group case (see \cite[Theorem~7.1.3]{ru}) 
have been applied.
 Recall that a \ma{state} on a $C^*$-algebra is a positive linear functional of norm $1$. Moreover, if $\cM$ is a von Neumann algebra with predual $\cM_*$, every state of $\cM$ can be approximated by a net of states in $\cM_*$  in  the weak$^*$ topology.  

\begin{theorem}\label{t:bai-A(H)<=>P-2}
Let $H$ be a  discrete  regular Fourier hypergroup. Then 
the following conditions are equivalent.
\begin{itemize}
\item[$(B_1)$]{ $A(H)$ has a $1$-bounded  approximate identity.}
\item[$(B_D)$]{ $A(H)$ has a $D$-bounded approximate identity for some $D\geq 1$.}
\item[$(P_2)$]{ $H$ satisfies $(P_2)$.}
\end{itemize}
\end{theorem}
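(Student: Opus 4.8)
The plan is to prove the cycle $(B_1) \Rightarrow (B_D) \Rightarrow (P_2) \Rightarrow (B_1)$, which will establish the equivalence of all three conditions. The implication $(B_1) \Rightarrow (B_D)$ is trivial, taking $D=1$. The real work lies in the other two implications.

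For $(B_D) \Rightarrow (P_2)$, I would start with a $D$-bounded approximate identity $(u_\alpha)$ in $A(H)$. By Proposition~\ref{p:Fourier-of-hypergroups}, each $u_\alpha = f_\alpha *_h \tilde g_\alpha$ for suitable $f_\alpha, g_\alpha \in L^2(H)$ with control on norms, but to land inside $(P_2)$ I need states rather than arbitrary elements. The strategy here is to pass from the approximate identity to a net of \emph{states} on $VN(H)$: since $A(H)$ acts as normal functionals on $VN(H)$ and $A(H)^* \cong VN(H)$, an approximate identity in $A(H)$ should force the unit of $VN(H)$ (the identity operator $\lambda(e)$) to be weak$^*$-approximable by positive normalized functionals coming from $A(H)$. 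Each such state, by the remark preceding the theorem, has the form $f *_h \tilde f$ with $\norm{f}_2 = 1$. Using the characterization in \cite[Lemma~4.4]{sk}, it then suffices to show $f_\alpha *_h \tilde f_\alpha \to 1$ uniformly on compact sets, which I would extract from the defining property $u_\alpha v \to v$ for $v \in A(H)$, applied to the functions $u_V$ of Lemma~\ref{l:A(H)-properties} that are identically $1$ on a prescribed compact set.

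For $(P_2) \Rightarrow (B_1)$, I would invoke the remark immediately after Definition~\ref{d:Reiter}: $(P_2)$ yields a net $g_\alpha = f_\alpha *_h \tilde f_\alpha \in A(H)$ with $\norm{g_\alpha}_{A(H)} \le 1$ and $g_\alpha \to 1$ uniformly on compacta. The task is to upgrade ``uniform convergence to $1$ on compact sets'' to ``$g_\alpha v \to v$ in $\norm{\cdot}_{A(H)}$ for all $v \in A(H)$.'' Since $A(H) \cap C_c(H)$ is dense in $A(H)$ and multiplication by the bounded net $(g_\alpha)$ is uniformly bounded, a standard $3\epsilon$-argument reduces this to the case $v \in A(H) \cap C_c(H)$ with compact support $K$. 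On such $v$, I would estimate $\norm{g_\alpha v - v}_{A(H)} = \norm{(g_\alpha - 1)v}_{A(H)}$; the regular Fourier algebra structure gives $\norm{(g_\alpha-1)v}_{A(H)} \le \norm{g_\alpha - 1}_{A(H)}\norm{v}_{A(H)}$, but that norm need not be small, so instead I would work through the duality with $VN(H)$, testing against operators and using that $g_\alpha \to 1$ uniformly on $K \supseteq \supp v$ together with the $\norm{\cdot}_{A(H)} \le$-norm bound to control the tail.

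The main obstacle I expect is precisely this last step: converting pointwise/uniform-on-compacta convergence into $A(H)$-norm convergence of the products $g_\alpha v$. In the classical group setting (\cite[Theorem~7.1.3]{ru}) one leans on the fact that $A(G)$ is a regular, Tauberian Banach algebra with a rich supply of compactly supported elements and that the Fourier algebra norm interacts well with the pointwise product; for hypergroups the analogous regularity properties are not automatic and must be handled through the $VN(H)$-duality and the standard-form machinery from Proposition~\ref{p:Fourier-of-hypergroups}. I would therefore organize the argument so that the delicate normalization — the fact, emphasized in the theorem statement, that the bound can always be taken to be exactly $1$ — emerges from the state-space approximation rather than from any direct manipulation of the approximate identity's norm.
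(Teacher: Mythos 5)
Your skeleton is the paper's: the same cycle of implications, the same inputs (Proposition~\ref{p:Fourier-of-hypergroups}, the description of $A(H)$-states as $f*_h\tilde f$ with $\norm{f}_2=1$, Lemma~\ref{l:A(H)-properties}, and \cite[Lemma~4.4]{sk}). But both of your nontrivial implications stall at the same point, and the device you propose there would fail. In $(P_2)\Rightarrow(B_1)$, testing $g_\alpha v - v$ against operators in $VN(H)$ or $C^*_\lambda(H)$ can only ever produce weak (or weak$^*$) convergence of the net $(g_\alpha v)$ to $v$; no ``control of the tail'' upgrades weak convergence of a fixed net to norm convergence, and there is no reason the original net $(g_\alpha)$ should be a norm approximate identity at all. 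The missing idea --- the crux of the paper's argument, used in both directions --- is Mazur's theorem: the weak closure of a convex set coincides with its norm closure. The paper shows $ug_\beta \to u$ in $\sigma(A(H), L^1(H))$, identifies this on bounded sets with the relevant weak topology (via $A(H)\subseteq B_\lambda(H)=C^*_\lambda(H)^*$ and density of $L^1(H)$ in $C^*_\lambda(H)$), and then extracts the bounded approximate identity from $\operatorname{conv}\{g_\beta\}$, not from $(g_\beta)$ itself. Convexity also rescues the normalization you want to ``emerge'' from the state picture: convex combinations of states are states, hence still of the form $\psi*_h\tilde\psi$ with $\norm{\psi}_2=1$, and then $1=\varphi(e)\le\norm{\varphi}_\infty\le\norm{\varphi}_{A(H)}\le 1$ forces norm exactly $1$.

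The same omission breaks your $(B_D)\Rightarrow(P_2)$ sketch, where you additionally conflate two different nets. You propose to get $f_\alpha*_h\tilde f_\alpha\to 1$ uniformly on compacta ``from the defining property $u_\alpha v\to v$'' applied to the functions of Lemma~\ref{l:A(H)-properties}; but that argument shows the original approximate identity $(u_\alpha)$ tends to $1$ on compacta, and the $u_\alpha$ need not have the form $f*_h\tilde f$ with $\norm{f}_2=1$, which is exactly what \cite[Lemma~4.4]{sk} requires. The states you obtain by weak$^*$-approximating the cluster point are a different net, and weak$^*$ approximation alone gives no pointwise, let alone locally uniform, convergence (note also that $\lambda(x)$ need not lie in $C^*_\lambda(H)$, so the cluster point's values at group-like elements do not transfer to the extended state). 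The paper's route is: show the cluster point restricts to a multiplicative functional on $L^1(H,h)$ (integration against the constant $1$), hence to a state on $C^*_\lambda(H)$; extend it to a state on $VN(H)$; approximate weak$^*$ by $A(H)$-states $f_\beta*_h\tilde f_\beta$; prove $uf_\beta\to u$ weakly; apply Mazur to get convex combinations $\varphi_\alpha$ (still states) with $\norm{u_K\varphi_\alpha-u_K}_{A(H)}\to 0$; and only then deduce uniform convergence on compact sets from $\norm{\cdot}_\infty\le\norm{\cdot}_{A(H)}$. Without the convexity step, neither implication closes.
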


\begin{proof}
$(B_1)\Rightarrow (B_D)$ is trivial.

$(B_D) \Rightarrow (P_2)$. 
 For $(e_\alpha)_\alpha$ a $D$-bounded approximate identity of $A(H)$, there exists a $w^*$-cluster point $F\in \VN(H)^{*}$. By replacing $(e_\alpha)_\alpha$ with a subnet if necessary, note that for each $x\in H$, $
\langle\lambda_x ,F\rangle = \lim_\alpha \langle\lambda_x ,e_\alpha\rangle = \lim_\alpha e_\alpha(x)=1$.
So $F|_{L^1(H)}$ may be interpreted as the constant function $1$ on $H$.
Hence $F|_{L^1(H)}$ is a multiplicative functional on $L^1(H)$ (i.e. $ \langle F, f*g\rangle  = \langle F,f\rangle\; \langle F,g\rangle$ for $f,g \in L^1(H)$). But $L^1(H)$ is dense in   $C^*_\lambda(H)$; hence,  $F|_{C^*_\lambda(H)}$ is also a (positive) multiplicative functional on $C^*_\lambda(H)$. Therefore, $F_0=F|_{C^*_\lambda(H)}$  is a state on $C^*_\lambda(H)$. 

Thus by 
\cite[Corollary~2.3.12]{ren},
$F_0$ is extendible to a state $E$ on $\VN(H)$.
Because  states of $\VN(H)$ which belong to $A(H)$ are weak$^*$ dense in the set of all states of $\VN(H)$, we may find a net $(u_\beta)_\beta$ in $\{ \xi *_\lambda   {\tilde{\xi}}:\ \xi\in L^2(H)\}$ such that $u_\beta=\xi_\beta *_\lambda   {\tilde{\xi_\beta}} \rightarrow E$ in  the weak$^*$ topology where  $(\xi_\beta)_\beta \subseteq L^2(H)$. 
Moreover, 
\[
\norm{\xi_\beta}_2^2= u_\beta(e) \leq  \norm{u_\beta}_{A(H)} (=1) \leq \norm{\xi_\beta}_2^2.
\] 
Recall that $E|_{L^1(H)}=F|_{L^1(H)} $. Also for each $f \in L^1(H)$ and $u\in A(H)$, $uf \in L^1(H)$. Therefore,
\[
\lim_\beta \langle uu_\beta, f\rangle = \langle E, uf\rangle =\langle F_0, uf\rangle = \lim_\alpha \langle e_\alpha, uf\rangle = \langle u, f\rangle.
\]
Therefore, $uu_\beta \rightarrow u$ with respect to the  topology $\sigma(A(H), L^1(H))$. This implies that $u_\beta(x)  \rightarrow 1$ on finite subsets of $H$ as $\delta_x \in L^1(H)$ for every $x\in H$. So by   \cite[Lemma~4.4]{sk}, the existence of the bounded  net $(\xi_\beta)_\beta$ implies $(P_2)$.

\vskip1.0em

$(P_2)\Rightarrow (B_1)$. Let $(u_\beta)_\beta$ be the net generated by $(P_2)$ in \cite[Lemma~4.4]{sk}, that is $u_\beta=\xi_\beta*\tilde{\xi}_\beta$ for some $\xi_\beta\in L^2(H)$ while $\norm{\xi_\beta}_2=1$ for every $\beta$ and $u_\beta\rightarrow 1$ uniformly on finite sets. Therefore,
$\norm{u_\beta}_{A(H)} \leq \norm{\xi_\beta}_2^2 = 1$ for each $\beta$.
As a bounded net in $A(H)$, by possibly reducing to a subnet,  $u_\beta \rightarrow E$ in the weak$^*$ topology of $\VN(H)^*$ for some $E \in \VN(H)^*$. Moreover, since $u_\beta(x) \rightarrow 1$ for every $x\in H$,   $E|_{L^1(H)}$ acts as the constant function $1$ where $L^1(H)$ is considered as a subalgebra of $\VN(H)$.

   Let $T$ be an arbitrary element in $\VN(H)$ and $(f_\alpha)_\alpha$ a net in $L^1(H)$ so that $f_\alpha \rightarrow T$ in the weak$^*$ topology on $\VN(H)$.  Let $u$ be an arbitrary element in $A(H)$. Recall that $uE \in A(H)$, by Lemma~\ref{l:ideal}. Therefore, we get
\[
\lim_\beta \langle uu_\beta , T\rangle =  \langle uE , T \rangle = \lim_\alpha \langle uE, f_\alpha \rangle = \lim_\alpha \langle u, f_\alpha \rangle = \langle u, T\rangle.
\]
Consequently,   $uu_\beta \rightarrow u$  in the weak topology $\sigma(A(H), \VN(H))$.
It is a well-known result of functional analysis that the weak closure of a convex set coincides with its norm closure. So one can render a bounded net $(v_\alpha)_\alpha$ in $\overline{\operatorname{conv}}\{u_\beta: \beta \}$  so that for each $v \in A(H)$, $\norm{vv_\alpha - v}_{A(H)} \rightarrow 0$.
\end{proof}

\ignore{

\vskip10.0em

Also as a multiplicative functional,  $\norm{F|_{C^*_\lambda(H)}}=1$. But as a positive norm $1$ functional, $F|_{C^*_\lambda(H)}$ is a state. 
Thus,
by \cite[Corollary~2.3.12]{ren},
$F|_{C^*_\lambda(H)}$ is extendible to a state $E$ on $\VN(H)$.
Because  states of $\VN(H)$ which belong to $A(H)$ are weak$^*$ dense in the set of all states of $\VN(H)$, we may find a net $(f_\beta)_\beta$ in $\{ f*_\lambda   {\tilde{f}}:\ f\in L^2(H,h)\}$ such that $f_\beta=g_\beta *_\lambda   {\tilde{g_\beta}} \rightarrow E$ in  weak$^*$ topology for a net $(g_\beta)_\beta \subseteq L^2(H,h)$. 
Moreover, 
\[
\norm{g_\beta}_2^2= f_\beta(e) \leq  \norm{f_\beta}_{A(H)} (=1) \leq \norm{g_\beta}_2^2.
\] 
 Let $T \in \VN(H)$ and $(h_\gamma)_\gamma \subset L^1(H)$ so that $h_\gamma \rightarrow T$ in the weak$^*$ topology. Recall that $E|_{L^1(H,h)}$ is the constant function $1$. Then for each $u \in A(H)$ we have
\begin{eqnarray*}
\lim_\beta \langle u f_\beta, T\rangle = \langle u E, T\rangle = \lim_\gamma \langle u E, h_\gamma \rangle = \lim_\alpha \langle 1, u h_\gamma \rangle = \lim_\gamma \langle u,h_\gamma \rangle = \langle u, T\rangle.
\end{eqnarray*}
Therefore, $uu_\beta \rightarrow u$ with respect to the weak topology $\sigma(A(H), \VN(H))$.
It is a well-known result of functional analysis that the weak closure of a convex set coincides with its norm closure, so  for every $\epsilon > 0$, there exists $\varphi_{\{u_1,\ldots, u_n\},\epsilon} = \varphi \in \operatorname{conv}\{f_\beta\}$ such that $u_i\in A(H)$ for $i = 1,\ldots,n$ and 
$\norm{u_i \varphi - u_i}_{A(H)}<\epsilon$.
Moreover, 
\[
1=\varphi(e) \leq \norm{\varphi}_\infty \leq \norm{\varphi}_{A(H)} \leq 1.
\]
Note that $\varphi$ is also is a positive functional in the cone of positive functionals on $\VN(H)$; therefore, $\varphi$ is actually a state and $\varphi=\psi*\tilde{\psi}$ for some $\psi \in L^2(H)$.

To make the set of all such $\varphi$'s a net, let $I := \{(S,\epsilon): S \subseteq A(H) \ \text{is finite},\ \epsilon> 0\}$ become a directed set by $(S,\epsilon) \leq (S',\epsilon')$ if $S\subseteq S'$ and $\epsilon \geq \epsilon'$. This lets us to render the  net $(\varphi_\alpha)_\alpha \subseteq \operatorname{conv}\{f_\beta\}$
that is a bounded  approximate identity of $A(H)$. On the other hand, for each compact set $K\subseteq H$, by Lemma~\ref{l:A(H)-properties}, there is some $u_K\in A(H)$ such that $u_K|K\equiv 1$. 
Therefore, for each $x\in K$,
\begin{eqnarray*}
\lim_\alpha|1-\varphi_\alpha(x)| = \lim_\alpha |u_K(x)-u_K(x)\varphi_\alpha(x)| &\leq&   \lim_\alpha \norm{u_K-u_K\varphi_\alpha}_{\infty} \\
&\leq&   \lim_\alpha \norm{u_K-u_K\varphi_\alpha}_{A(H)} =0.
\end{eqnarray*}
So $\varphi_\alpha \rightarrow 1$ uniformly on compact subsets of $H$. Consequently,  by \cite[Lemma~4.4]{sk}, the existence of the net $(\varphi_\alpha)_\alpha$ implies $(P_2)$.

\vskip1.0em

$(P_2)\Rightarrow (B_1)$. \\
Let $(g_\beta)_\beta$ be the net generated by $(P_2)$ in \cite[Lemma~4.4]{sk}, that is $g_\beta=f_\beta*\tilde{f}_\beta$ for some $f_\beta\in L^2(H)$ while $\norm{f_\beta}_2=1$ for every $\beta$ and $g_\beta\rightarrow 1$ uniformly on compact sets. Therefore,
\[
1=\norm{f_\beta}_2^2= g_\beta(e) \leq \norm{g_\beta}_{\infty} \leq \norm{g_\beta}_{A(H)} \leq \norm{f_\beta}_2^2 \leq 1.
\]
 Also for each $u\in A(H) \cap C_c(H)$ and $f\in L^1(H)$,
 \begin{eqnarray*}
 \lim_\beta |\langle u g_\beta - u,f\rangle| &\leq& \lim_\beta \int_H |u(x)| | g_\beta(x)-1| |f(x)| dx\\
 &=& \int_{\supp(u)} |u(x)| |g_\beta(x) -1| |f(x)| dx=0. 
 \end{eqnarray*}

Let us fix $u\in A(H)$. For given  $\epsilon>0$ and $f\in L^1(H)$, there is some $v\in A(H) \cap C_c(H)$ such that $\norm{u-v}_{A(H)}<\epsilon$ and $\beta_0$ such that for any $\beta \succcurlyeq \beta_0$, $|\langle v g_\beta - v, f\rangle| <\epsilon$. So for any $\beta \succcurlyeq \beta_0$,
\begin{eqnarray*}
|\langle u g_\beta - u, f\rangle| &\leq& |\langle u g_\beta - v g_\beta, f\rangle| +|\langle v g_\beta - v , f\rangle| +|\langle v  - u , f\rangle| \\
&\leq & \norm{u - v}_{A(H)} \norm{ g_\beta}_{A(H)} \norm{f}_1  +\epsilon +\norm{v  - u}_{A(H)} \norm{ f}_1\\
& <&  \epsilon (2\norm{f}_1+1).
\end{eqnarray*}

Therefore, by one generalization to arbitrary functions on $A(H)$, $\lim_\beta ug_\beta = u$ in the topology $\sigma(A(H), L^1(H))$. But indeed $A(H)\subseteq B_\lambda(H)$ and this topology on bounded subsets of $A(H)$ coincides to the weak topology on $B_\lambda(H)$ i.e. $\sigma(B_\lambda(H), C^*_\lambda(H))$. So similar to the previous part, there is a $(e_\alpha)_\alpha \subset \operatorname{conv}\{g_\beta\}_\beta$ such that 
\[
\lim_\alpha \norm{ue_\alpha - e_\alpha}_{A(H)}=0
\]
for every $u\in A(H)$. Also note that for each $\alpha$, 
\[
1=e_\alpha(e) \leq \norm{e_\alpha}_{\infty} \leq \norm{e_\alpha}_{A(H)}\leq 1.
\]
\vskip1.0em

$(B_1)\Rightarrow (B_D)$ is trivial.
\end{proof}
}

\begin{eg}
Most of the discrete  regular Fourier hypergroup examples in \cite{mu1} including  \ma{Jacobi polynomial hypergroups}, \ma{cosh hypergroup}, and \ma{generalized Chebyshev polynomials} are hypergroups for them the support of the Plancherel measure includes the trivial character; hence, they satisfy $(P_2)$. Thus the Fourier algebra has a $1$-bounded approximate identity.
\end{eg}

Let us summarize the relation of amenability notions of a discrete regular Fourier hypergroup as follows.
\[\xymatrix{ 
{(F)} \ar@{=>}[r] &{(SF)} \ar@{=>}[r] & {(L_1)} \ar@{=>}[d] \ar@{=>}[rr] &                                  & (P_2) 
 \ar@{=>}[r]_{\nLeftarrow} & (P_1) 
\\
              &              &  {(L_D)} \ar@{=>}[r]                & {(B_D)} \ar@{<=>}[r]  & (B_1)\ar@{<=>}[u] 
                              &  
                            }
\]
 Recall that the implication $(L_1)\Rightarrow (P_2)$ is due to \cite[Proposition~4.4.3]{singh-mem}, as mentioned before, and $(P_2)\Rightarrow (P_1)$ is due to \cite[Theorem~4.3]{sk}. 
Also note that for a locally compact group $H$ all these conditions are equivalent and equal the amenability of the group.

\end{section}

\begin{section}{Amenability of hypergroup algebras}\label{s:AM-L1(H)}

In this section,    $H$ is   a discrete commutative hypergroup with the normalized Haar measure $\lambda$ (i.e. $\lambda(e)=1$).  In the following we rule out amenability of the hypergroup algebra of $H$ when the function defined by $x \mapsto \lambda(x)^{-1}$ belongs to $c_0(H)$.

 Since,   for every $x\in H$, $\lambda(x)\geq 1$,  $L^1(H)$ is  a subset of $\ell^1(H)$ and  $\norm{f}_{\ell^1(H)} \leq \norm{f}_{L^1(H)}$ for every $f \in L^1(H)$.  On the other hand, they are isometrically Banach algebra isomorphic through the  mapping 
\[
\iota:L^1(H) \rightarrow \ell^1(H), \ \ \ \ \delta_x \mapsto \lambda(x)\delta_x.
\]
The Fourier transform ${\cal F}:L^1(H) \rightarrow C(\wH)$ and the Fourier Stieltjes transform ${\cal FS}:\ell^1(H)\rightarrow C(\wH)$ are defined by
 \[
 \mathcal{F}(f)(\alpha) :=\sum_{x\in H} f(x) \overline{\alpha(x)}\lambda(x), \ \ \ \mathcal{FS}(\mu)(\alpha):= \sum_{x\in H} \mu(x) \overline{\alpha(x)}
 \]
 respectively,  for $f \in L^1(H)$, $\mu \in \ell^1(H)$, and $\alpha \in \widehat{H}$.  Hence, ${\cal FS}(\delta_e)(\alpha) =1$ for every $\alpha \in \widehat{H}$. 
 It is easy to check that    ${\cal FS}(\iota(f))={\cal F}(f)$ for every $f\in L^1(H)$.
We use ${\cal F}_2$ and ${\cal FS}_2$ to denote respectively the  Fourier and Fourier-Stieltjes transforms  on $H\times H$. 
Let us recall that, $L^1(H\times H)$ is isometrically Banach algebra isomorphic to  the projective tensor product, $L^1(H)\otimes_\gamma L^1( H)$.  We also    denote the Haar measure of $H \times H$ by $\lambda^2$, that is $\lambda^2(x,y)= \lambda(x) \lambda(y)$.

\vskip1.0em

 The  idea behind the proof of the following theorem  is from a study on weighted group algebras, \cite{bade}. Due to the similarity of the Haar measure of discrete hypergroups and weights on discrete groups, Lasser in \cite[Theorem~3]{la2} applied a similar argument to prove the following result for polynomial hypergroups. Here we  prove Lasser's result for discrete commutative hypergroups satisfying $(P_2)$.

\begin{theorem}\label{t:non-amenability-of-hypergroup-algebra}
Let $H$ be a discrete commutative  hypergroup which satisfies $(P_2)$. If $L^1(H)$ is amenable then   there is some $M\geq 1$ such that $\{x\in H: \lambda(x)\leq M\}$ is infinite.
\end{theorem}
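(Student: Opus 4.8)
The plan is to argue by contraposition: assuming $L^1(H)$ is amenable and that $(P_2)$ holds, I suppose that $\{x:h(x)\le M\}$ is finite for \emph{every} $M$ (equivalently, $h$ is proper, $h(x)\to\infty$) and derive a contradiction with Lemma~\ref{l:cc-separates-characters}. First I would record two structural facts. Testing the Plancherel identity against the point masses $\mathbb 1_{\{x\}}$ (which lie in $L^1\cap L^2$) gives $\mathcal F(\mathbb 1_{\{x\}})(\alpha)=h(x)\overline{\alpha(x)}$ and hence $\int_{\widehat H}|\alpha(x)|^2\,d\varpi(\alpha)=1/h(x)$ for every $x\in H$; in particular, at $x=e$ this reads $\varpi(\widehat H)=1$, so $\varpi$ is a probability measure on the compact space $\widehat H$. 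From $(P_2)$ and the recalled equivalence \cite{sk}, the trivial character $1$ lies in $\supp(\varpi)$.

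The easy case is $\varpi(\{1\})=c>0$: then $1/h(x)=\int|\alpha(x)|^2\,d\varpi\ge c\,|1(x)|^2=c$ for all $x$, so $h(x)\le 1/c$ everywhere and $\{x:h(x)\le 1/c\}=H$ is infinite, already yielding the conclusion. So I may assume $\varpi(\{1\})=0$, whence $1$ is a non-isolated point of $\supp(\varpi)$ and there is a sequence $\alpha_k\in\supp(\varpi)$ with $\alpha_k\ne 1$ and $\alpha_k\to 1$ in the Gelfand topology, i.e.\ $\alpha_k(x)\to 1$ for each fixed $x$.

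Next I would convert amenability, through Lemma~\ref{l:cc-separates-characters} and the isometry $\iota\colon L^1(H,h)\to\ell^1(H)$, into a uniform separation of characters in the sup-norm. Setting $b_x:=g(x)h(x)$ for $g\in c_c(H)$ rewrites $\mathcal F(g)(\alpha)=\sum_x b_x\,\overline{\alpha(x)}$ with $\|g\|_1=\sum_x|b_x|$, so the separating functions of the lemma (taken from a fixed ball, as \cite{go} provides) produce a constant $\eta>0$ with $\sup_x|\alpha(x)-\beta(x)|\ge\eta$ for all $\alpha\ne\beta$ in $\widehat H$; that is, $\widehat H$ is $\eta$-separated in $\ell^\infty(H)$. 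The contradiction I aim for is to produce two \emph{distinct} characters lying within $\eta$ of one another in this norm. The mechanism is the dichotomy forced by properness of $h$: a Gelfand-near-$1$ character is close to $1$ only on a finite block, while at points $x$ with $h(x)$ large the identity $\int|\alpha(x)|^2\,d\varpi=1/h(x)$ forces $|\alpha(x)|$ to be small for $\varpi$-most $\alpha$. I would therefore fix a Gelfand neighbourhood $W$ of $1$ with $\varpi(W)>0$ whose finite coordinate set contains all $x$ with $h(x)\le M$, use $\int_W|\alpha(x)|^2\,d\varpi\le 1/M$ on the complementary tail, and try to select two characters in $W$ whose values agree to within $\eta$ both on the small-$h$ block (where both are near $1$) and on the large-$h$ tail (where both are near $0$), so that $\|\alpha-\beta\|_\infty<\eta$ with $\alpha\ne\beta$.

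I expect the genuine difficulty to be exactly this last selection step: promoting the per-coordinate (Gelfand, or $L^2(\varpi)$-averaged) smallness of characters on the tail to a \emph{simultaneous} uniform bound over the whole infinite tail for two individual characters, since averages do not control a single character at every tail point. To organise this I would instead bring in the bounded approximate diagonal supplied by amenability (via its weak$^*$ limit $M$, for which $\langle M,\gamma\otimes\gamma\rangle=1$ and $\langle M,\gamma\otimes\gamma'\rangle=0$ for $\gamma\ne\gamma'$ by invariance) and pair it with the anti-diagonal kernel $\Phi(x,y)=(\delta_x*\delta_y)(\{e\})=\int_{\widehat H}\overline{\alpha(x)}\,\overline{\alpha(y)}\,d\varpi(\alpha)$; one checks $\langle M,\Phi\rangle=1$, while $\Phi\in c_0(H\times H)$ precisely because $h$ is proper. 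Reading the concentration of the diagonal mass of $M$ on the finite small-$h$ set against the rigid value $\langle M,\Phi\rangle=1$ is the technical heart of the argument, and reconciling these two bookkeepings — equivalently, manufacturing the forbidden pair of uniformly close characters — is the main obstacle I would need to overcome.
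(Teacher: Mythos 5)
Your preliminary reductions are all correct (the Plancherel identities $\int_{\widehat H}|\alpha(x)|^2\,d\varpi(\alpha)=1/h(x)$, that $\varpi$ is a probability measure, the dichotomy in which $\varpi(\{1\})>0$ already forces $h$ bounded, and the reduction to $1$ being a non-isolated point of $\supp(\varpi)$), and they do match the endgame of the paper's proof. But the proposal is not a proof: both of your routes stop, by your own admission, exactly where the contradiction with amenability must be produced, and that step is the entire content of the theorem. Route A is a genuine dead end for the reason you yourself identify: non-isolation of $1$ only gives closeness in the Gelfand (pointwise) topology, and the averaged bound $\int|\alpha(x)|^2\,d\varpi=1/h(x)$ cannot be upgraded to a simultaneous uniform bound on an infinite tail for two individual characters; moreover the uniform $\eta$-separation you aim to violate is a red herring, since the paper's argument only ever uses the trivial fact that two distinct characters are separated by \emph{some} element of the algebra. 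Route B assembles the right objects but has no mechanism for a contradiction. Your identity $\langle M,\Phi\rangle=1$ is in fact correct and can be proved cleanly, because $\Phi=\pi^{*}(1_{\{e\}})$ for the multiplication map $\pi$ and $L^1(H,h)$ is unital, so $\langle M,\Phi\rangle=\langle \pi^{**}(M),1_{\{e\}}\rangle=1$ for \emph{any} virtual diagonal (your suggested verification through $\int \mathcal{F}_2(m_j)(\alpha,\alpha)\,d\varpi$ runs into dominated convergence for nets). The trouble is that this value contradicts nothing: your kernel $\Phi$ is symmetric for the two module actions, $f\cdot\Phi=\Phi\cdot f$, because its kernel $\int\overline{\alpha}\otimes\overline{\alpha}\,d\varpi$ is carried by the diagonal of $\widehat H\times \widehat H$; hence testing the invariance $f\cdot M=M\cdot f$ against $\Phi$ is vacuous, and the most one can extract from this setup (restricting $M$ to $c_0(H\times H)$ to get an invariant element of $\ell^1(H\times H)$ whose transform vanishes off-diagonal and integrates to $1$ along the diagonal) is that $\varpi$ has an atom at \emph{some} isolated point $\alpha_0$ of $\widehat H$ --- which is perfectly compatible with $h$ proper, since a decaying character can lie in $L^2(H,h)$; only $\alpha_0=1$, with $|\alpha_0|\equiv 1$, gives $\sum_x|\alpha_0(x)|^2h(x)=\infty$.

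The missing idea --- the heart of the paper's proof --- is the device that pins everything at the trivial character: replace your $\Phi$ by $\phi_0(x,y)=h(x)^{-1}h(y)^{-1}$, which lies in $c_0(H\times H)$ \emph{exactly} because $h$ is proper, and whose pairing with an element of $\ell^1(H\times H)$ is nothing but evaluation of its Fourier--Stieltjes transform at $(1,1)$. Then $\cX=\ker\phi_0$ is a weak$^*$-closed dual $L^1(H,h)$-bimodule (this is where properness and Proposition~\ref{p:L^1-C_0} enter), $Df=f\otimes\delta_e-\delta_e\otimes f$ is a derivation into $\cX$, and amenability makes it inner, $Df=f\cdot\Psi-\Psi\cdot f$ with $\Psi\in\cX$. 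The element $N=\delta_{(e,e)}-\Psi$ is then invariant, its transform $\mathcal{FS}_2(N)$ vanishes at every off-diagonal pair $(\alpha,\beta)$ with $\alpha\neq\beta$ (character separation, used only to divide by a nonzero number), while $\mathcal{FS}_2(N)(1,1)=1-0=1$ by the very definition of $\cX$. Now Gelfand-continuity of $\mathcal{FS}_2(N)$, tested against your own net $(\alpha_k,1)\to(1,1)$ with $\alpha_k\neq 1$, gives $1=\lim_k \mathcal{FS}_2(N)(\alpha_k,1)=0$ --- only \emph{pointwise} convergence of characters is used, which is precisely how the paper sidesteps the sup-norm obstruction that stopped you; equivalently, $1$ must be isolated, and then $(P_2)$ puts an atom of $\varpi$ at $1$, landing in your easy case and contradicting properness. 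So what is absent from your proposal is exactly this central mechanism: the codimension-one dual bimodule $\ker\phi_0$ together with the derivation based at the identity $\delta_{(e,e)}$, which converts amenability into isolation of the trivial character via continuity alone.
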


\begin{proof}  Note that  $c_0(H \times H)$ forms an $L^1(H)$-bimodule by actions  $f\cdot  \phi := (\iota(f)\otimes \delta_{e})  *  \phi$ and $\phi \cdot f := (\iota(f) \otimes \delta_{e}) * \phi$ for every $f\in L^1(H)$ and $\phi \in c_0(H\times H)$ where $e$ is the trivial element of $H$.
Be aware of the difference between $*$, the convolution of $\ell^1(H\times H)$ and $*_{\lambda^2}$, the convolution on   $L^1(H\times H)$. 
By \cite[Proposition~1.2.36]{bl} and because $\iota$ is an isometric Banach algebra isomorphism, $f \cdot \phi$ and $\phi \cdot f$ also belong to $c_0(H \times H)$ and therefore, by these two actions  $c_0(H\times H)$ forms an $L^1(H)$-bimodule. 

Note that $L^1(H\times H)$ can be considered as the dual of $c_0(H\times H)$ with the dual product
\[
\langle \phi, \Phi \rangle :=\sum_{x,y \in H} \phi(x,y) \Phi(x,y) \lambda^2(x,y). \ \ \ \ (\phi \in c_0(H\times H), \ \Phi \in L^1(H\times H))
\]
Through this duality, $L^1(H\times H)$ becomes a dual  $L^1(H)$-bimodule where,
by \cite[Proposition~1.4.7]{bl},
\[
\langle f \cdot \Phi, \phi\rangle := \langle \Phi, f \cdot \phi\rangle = \langle \Phi, (\iota(f) \otimes \delta_e) * \phi\rangle = \langle (\iota(\tilde{f})\otimes \delta_e) * \Phi, \phi \rangle
\]
and similarly, $\Phi \cdot f = (\delta_e \otimes \iota(\tilde{f})) * \Phi$ for all $\Phi \in L^1(H \times H)$, $\phi \in c_0(H\times H)$, and $f \in L^1(H)$ where $\tilde{f}(x)= f(\tilde{x})$.

\vskip0.5em

Assume that $\lambda(x) \rightarrow \infty$.  Let $\phi_0\in c_0(H\times H)$ be defined by $\phi_0(x,y)=\lambda(x)^{-1} \lambda(y)^{-1}$.  Therefore, $\ker(\phi_0) \subseteq L^1(H \times H)$ is the weak$^*$ closed subspace of $L^1(H\times H)$ consisting of all $\Phi$ such that $\sum_{x,y} \Phi(x,y)=0$.  In particular $\Phi \in \ker(\phi_0)$ if and only if  ${\cal FS}_2(\Phi)(1,1)= \sum_{x,y} \Phi(x,y) =0$.
We  claim  that $\ker(\phi_0)$ is a dual  $L^1(H)$-bimodule.

 To prove this claim, let $f\in L^1(H)$ and $\Phi \in \ker(\phi_0)$. Therefore,
 \[
\mathcal{FS}_2(f \cdot \Phi)= \mathcal{FS}_2((\iota(\tilde{f})\otimes \delta_e) * \Phi) = \mathcal{FS}( \iota(\tilde{f})) \mathcal{FS}_2(\Phi)=0.
 \]
 Thus, $f \cdot \Phi$ also belongs to $\ker(\phi_0)$. Similarly, $\Phi \cdot f \in \ker(\phi_0)$. Therefore, 
  by \cite[Proposition~1.3]{bade}, $\ker(\phi_0)$ actually  forms a dual $L^1(H)$-bimodule.

  Let us define a linear mapping   $D(f):=\iota( \tilde{f})\otimes \delta_{e} - \delta_{e}\otimes \iota( \tilde{f})$ for $f \in L^1(H)$. It is clear that ${\cal FS}_2(D(f))(1,1)=0$ for every $f\in L^1(H)$; hence, $D:L^1(H)\rightarrow \ker(\phi_0)$. Also it is straightforward to check that $D$ is actually a derivation on $L^1(H)$ into the dual $L^1(H)$-bimodule $\ker(\phi_0)$. 
 
  \ignore{
   $D(f)=  {f}\cdot \delta_{e,e} - \delta_{e,e}  \cdot  {f}$ for every $f\in L^1(H)$. Therefore,  $D$ is a derivation. (Note that based on its definition, $D$ is not necessarily an inner derivation as $\delta_{(e,e)}$ does not belong to $\ker(\phi_0)$.) 
}

\vskip0.5em

Toward a contradiction assume that  $L^1(H)$ is amenable. Therefore, $D$ has to be inner that is,  there is some $\Phi_0 \in \ker(\phi_0)$ such that $D(f)=f\cdot \Phi_0 - \Phi_0 \cdot f$ for every $f \in L^1(H)$. 
For each pair $\alpha, \beta \in \widehat{H}$ with $\alpha \neq \beta$,    there exists  $f(=f_{(\alpha, \beta)}) \in L^1(H)$ such that  ${\cal F}(f )(\alpha)\neq {\cal F}(f )(\beta)$. Therefore,  
\begin{eqnarray*}
0\neq {\cal F}(f )(\alpha)-{\cal F}(f )(\beta)  &=&  {\cal FS}(\iota(f))(\alpha) - {\cal FS}(\iota(f))(\beta) \\
&=&  {\cal FS}_2(\iota(f)\otimes \delta_e  - \delta_e \otimes \iota(f))(\alpha, \beta) \\
&=&   {\cal FS}_2(D(\tilde{f})){(\alpha, \beta)}  \\
&=&  {\cal FS}(\iota(f))(\alpha) {\cal FS}_2(\Phi_0){(\alpha, \beta)} -{\cal FS}(\iota(f))(\beta) {\cal FS}_2(\Phi_0){(\alpha, \beta)}   \\
&=& \left({\cal F}(f)(\alpha)-{\cal F}(f )(\beta)\right)  {\cal FS}_2(\Phi_0){(\alpha, \beta)} .
\end{eqnarray*}
Consequently, ${\cal FS}_2(\Phi)(\alpha,\beta)=1$ for all $\alpha\neq \beta$ while ${\cal FS}_2(\Phi)(1,1)=0$. The continuity of ${\cal FS}_2(\Phi)$ implies that $(1,1)\in \wH\times\wH$ is an isolated point.   

Note that since $H\times H$ satisfies $(P_2)$, $(1,1)$ belongs to the support of the Plancherel measure. 
Therefore,  by \cite[Theorem~2.11]{voit} it is not possible that  $(1,1)\in \wH\times\wH$ is an isolated point unless $H$ is finite.  This finises the proof.
\end{proof}

\begin{subsection}{$ZA(G)$ of compact groups}\label{ss:AM-ZA(G)}

It was showed in \cite{ma} that for every compact group $G$, $\Irr(G)$ is a commutative discrete regular Fourier hypergroup and the hypergroup algebra of $\Irr(G)$ is isometrically Banach algebra isomorphic to $ZA(G)$, the subalgebra of the Fourier algebra of $G$ which is constructed by group characters.   $ZA(G)$ is called the \ma{central Fourier algebra} of $G$.

Amenability notions of $ZA(G)$ were studied in \cite{ma12} where a complete characterization for weak amenability was found. It was also shown that if $G$ is virtually abelian, $ZA(G)$ is amenable. Based on a result by Moore, \cite{moo}, for a virtual abelian group $G$, dimensions of the irreducible unitary representations are uniformly bounded. Therefore, $\{\lambda(\pi):  \pi \in \Irr(G)\}$ is a finite set. In \cite{ma12}, it was conjectured that the converse should also hold. 
The following corollary supports this conjecture.

\begin{cor}\label{c:AM-of-ZA(G))}
Let $G$ be a  non-discrete compact group such that  $\{\pi\in \wG:  d_\pi = n\}$ is finite for each positive integer $n$. Then $ZA(G)$ is not amenable.
\end{cor} 

\begin{proof}
It is known that $\Irr(G)$ as a strong hypergroup satisfies $(P_2)$, (look at \cite[Proposition~2.4]{ma14}).
Since $ZA(G)$  is isometrically Banach algebra  isomorphic to the hypergroup algebra of $\Irr(G)$, one  can apply Theorem~\ref{t:non-amenability-of-hypergroup-algebra} to  the hypergroup $\wG$ to finish the proof.
\end{proof}

One may compare Corollary~\ref{c:AM-of-ZA(G))} with   \cite[Theorem~6.1]{jo1} where Johnson proved a similar result for amenability of $A(G)$. 
 
\begin{rem}
The condition of Corollary~\ref{c:AM-of-ZA(G))} is far from being necessary for the amenability of $ZA(G)$. For example let $G=\Bbb{T}\times \operatorname{SU}(2)$.  By \cite[Proposition~2.1]{ma12}, $ZA(\operatorname{SU}(2))$ is not  weakly amenable  because it has a non-zero bounded point derivation, say $D_\theta$. Therefore, $D_\theta\otimes \varepsilon_e$ forms a symmetric non-zero bounded derivation on $ZA(\operatorname{SU}(2)) \otimes_\gamma A(\Bbb{T})$ when $\varepsilon_e(g):=g(e)$ for $e$ the identity of the group $\Bbb{T}$; hence, $ZA(\operatorname{SU}(2)\times \Bbb{T})\cong ZA(\operatorname{SU}(2)) \otimes_\gamma A(\Bbb{T})$ is not (weakly) amenable. On the other hand,  for each $n$ there are infinitely many  $\pi \in \wG$ such that $d_\pi =n$. 
\end{rem}

Let $G$ be  a compact group such that $d_\pi \rightarrow \infty$. Then $G$ is called a \ma{tall group}. Some properties of tall groups, specially profinite tall groups, have been studied in \cite{tall1,tall2,tall3}.

\begin{eg}\label{eg:amenability-of-Lie-groups}
Let us use the notations and facts mentioned in  the proof of Proposition~\ref{p:Leptin-number-of-SU(3)}. 
So based on Weyl's formula, one can easily show that 
$\tau(\pi) \leq |B|d_\pi$.
Hence,  $\{d_\pi\}_{\pi \in \Irr(G)}$ cannot be bounded when $B$ is finite. Therefore, $ZA(G)$ is not amenable. This class of compact Lie groups includes $\operatorname{SU}(n)$'s  for $n\geq 2$.
\end{eg}

\end{subsection}

\begin{subsection}{$Z\ell^1(G)$ of FC groups}\label{ss:amenability-of-zl1(G)}

 Let $G$ be a discrete \ma{finite conjugacy} or \ma{FC} group that is, for each conjugacy class $C$, $|C|<\infty$. Let us use $\conj(G)$ to denote the set of all conjugacy classes of $G$. $Z\ell^1(G)$ is  the center of the group algebra of $G$.  

It is known that $\conj(G)$ forms a discrete commutative hypergroup, \cite{bl}. It is straightforwrd to check that for each $C\in \conj(G)$, $\lambda(C)=|C|$ for $\lambda$ the normalized Haar measure of $\conj(G)$. It is known that  $Z\ell^1(G)$ is isometrically isomorphic to the  hypergroup algebra of $\conj(G)$. Also, since $\conj(G)$ is a strong hypergroup, $\conj(G)$ satisfies $(P_2)$ (look at \cite{hart}).

  Now, Theorem~\ref{t:non-amenability-of-hypergroup-algebra} immediately implies the following corollary.

\begin{cor}\label{c:ZL-AM-FC-groups}
Let $G$ be an infinite  FC group such that for every integer $n$ there are  only finitely many conjugacy classes $C$ such that  $|C|=n$. Then $Z\ell^1(G)$ is not amenable.
\end{cor}

\begin{proof}
As we saw before, $\conj(G)$ is a  discrete commutative hypergroup satisfying $(P_2)$.  Now one applies Theorem~\ref{t:non-amenability-of-hypergroup-algebra} and isomorphism $L^1(\conj(G))\cong \Zlg$ to finish the proof.
\end{proof}

For a group $G$, let $G'$ denote the \ma{derived subgroup} of $G$. It is immediate that if $G'$ is finite, for every $C \in \conj(G)$, $|C|\leq |G'|$.  The converse is also true i.e. if $\sup_{C\in \conj(G)} |C|<\infty$, then $|G'|<\infty$, see \cite{robinson}.

\begin{rem}
Recall that in \cite{AzSaSp}, it was proved  that for a locally compact group $G$ with a finite derived subgroup $G'$, $ZL^1(G)$ is amenable. 
Also for a specific class of FC groups, called RDPF groups, the amenability of $\Zlg$ is characterized in \cite{ma2}. The main result of \cite{ma2}  for a RDPF group $G$ is,  $\Zlg$ is amenable if and only if $|G'|<\infty$. These two studies suggest that the later characterization for RDPF groups may be  extendible  to general FC groups.
\end{rem}

\end{subsection}

\end{section}




\vskip2.0em


\section*{Acknowledgements}
This research was supported by a Ph.D. Dean's Scholarship at  University of Saskatchewan and a Postdoctoral Fellowship form the Fields Institute  and University of Waterloo.  These supports are gratefully acknowledged. The author also would like to express his deep gratitude to Yemon Choi, Ajit Iqbal Singh, Ebrahim Samei, and Nico Spronk for several constructive discussions and suggestions which improved the paper significantly. 
The author also thanks  Nico Spronk for directing him to \cite{zw-the}, Ajit Iqbal Singh for directing him to \cite{singh-mem}, and Jason Crann for directing him to \cite{izu}.
\footnotesize

\def\cprime{$'$} \def\cprime{$'$}

\vskip2.5em

{ 
\noindent {\normalsize Mahmood Alaghmandan}\newline\indent

\vskip1.0em

Department of Mathematical Sciences,\\
 Chalmers University of Technology and  University of Gothenburg,\\
  Gothenburg SE-412 96, Sweden

\vskip1em

Email: \texttt{mahala@chalmers.se}

\end{document}